\newtheorem{theorem}{Theorem}[section]
\newtheorem{lemma}[theorem]{Lemma}
\newtheorem{proposition}[theorem]{Proposition}
\theoremstyle{definition}
\newtheorem{definition}[theorem]{Definition}
\newtheorem{example}[theorem]{Example}
\newtheorem{remark}[theorem]{Remark}
\numberwithin{equation}{section}
\begin{document}
\author{A. Alahmadi}
\address{Department of Mathematics,
	King abdulaziz University,
	Jeddah, SA\\
	email:adelnife2@yahoo.com}
\author{S.K. Jain}
\address{Department of Mathematics, University of Ohio, Athens, USA \\
email: jain@ohio.edu}
\author{A. Leroy }
\address{Artois University,Laboratoire de Math\'ematiques de Lens, UR 2462 , F-62300, Lens,
France\\
email: andre.leroy@univ-artois.fr}
\title[]{Remarks on separativity of regular rings}

\begin{abstract}
Separative von Neumann regular rings exist in abundance. For example, all
regular self-injective rings, unit regular rings, regular rings with a
polynomial identity are separative. It remains open whether there exists a
non-separative regular ring. In this note, we study a variety of conditions
under which a von Neumann regular ring is separative. We show that a von
Neumann regular ring $R$ is separative under anyone of the following cases: {\it (1)} $R$ is CS; {\it (2)} $R$ is pseudo injective (auto-injective); {\it (3)} $R$ satisfies the closure extension property: the essential closures in $R$ of two isomorphic right ideals are themselves isomorphic.
 We also give another characterization of a regular perspective ring (Proposition \ref{unit regular})
\end{abstract}

\maketitle

\section{Introduction and preliminaries}

A ring $R$ is von Neumann regular if for any $a\in R$ there exists $b\in R$
such that $a=aba$. A ring $R$ is separative if for any finitely generated
projective right modules $M$ and $N$, the isomorphisms $M\oplus N\cong
M\oplus M\cong N\oplus N$ imply $M\cong N$. Many regular rings are
separative. For example, right (or left) self-injective regular rings, unit
regular rings, regular rings with polynomial identity are separative. The
separativity condition for regular rings is connected with the following two
important properties for a nonunit element $a\in R$:

\begin{equation}
l(a)R=Rr(a)=R(1-a)R
\end{equation}
where $r(a)=\{x \in R \mid ax=0\}$ and $l(a)=\{x\in R \mid xa=0\}$.

\begin{equation}
\mathrm{\ The \; element \; a\; is\; a \;product\; of\; idempotents.}
\end{equation}%
.

In \cite{HO} Hannah and O'Meara proved the following:

\begin{theorem}
Let $R$ be a regular right self-injective ring and let $a\in R$. Then $a$ is
a product of idempotents if and only if $Rr(a)=l(a)R=R(1 - a)R$.
\end{theorem}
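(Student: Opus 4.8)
The plan is to treat the two implications separately; the easier one holds over an arbitrary regular ring, and the self-injectivity hypothesis is needed only for the converse. First I would record two containments valid for \emph{every} element $a$ of \emph{every} ring: if $ax=0$ then $x=(1-a)x\in R(1-a)R$, so $r(a)\subseteq R(1-a)R$ and hence $Rr(a)\subseteq R(1-a)R$; the left-handed version gives $l(a)R\subseteq R(1-a)R$. Consequently the asserted equality $Rr(a)=l(a)R=R(1-a)R$ is \emph{equivalent} to the two memberships $1-a\in Rr(a)$ and $1-a\in l(a)R$, since $Rr(a)$ and $l(a)R$ are already two-sided ideals.

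For the forward implication, assume $a=e_1e_2\cdots e_n$ with each $e_i$ idempotent. Choosing $b$ with $a=aba$ and setting $g=ba$, $h=ab$, one has the standard identities $r(a)=(1-g)R$, $Ra=Rg$, $l(a)=R(1-h)$, $aR=hR$ with $g,h$ idempotent. Working in $\overline R:=R/Rr(a)=R/R(1-g)R$, where $\overline g=\overline 1$, we get $\overline R\,\overline a=\overline{Ra}=\overline{Rg}=\overline R$, so $\overline a$ has a left inverse. Then in $\overline a=\overline e_1\cdots\overline e_n$ the last factor $\overline e_n$ has a left inverse, and an idempotent possessing a one-sided inverse equals $1$; hence $\overline e_n=\overline 1$, and repeating the argument downwards gives $\overline e_i=\overline 1$ for all $i$, so $\overline a=\overline 1$, i.e. $1-a\in Rr(a)$. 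The symmetric computation in $R/l(a)R=R/R(1-h)R$, where $\overline a$ now has a right inverse, yields $1-a\in l(a)R$. Combined with the containments of the first paragraph, this is exactly $Rr(a)=l(a)R=R(1-a)R$.

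The converse is the substantial part and is where right self-injectivity enters. Suppose $Rr(a)=l(a)R=R(1-a)R$. If $a$ is a unit then $r(a)=l(a)=0$, so the hypothesis forces $R(1-a)R=0$ and $a=1$, an idempotent; hence I may assume $a$ is a non-unit, so that $r(a)\neq0$. Since $R$ is regular, $r(a)$ is a direct summand of $R_R$, say $r(a)=gR$ with $g^2=g$, and $aR=eR$ for some idempotent $e$; the hypothesis then says exactly that $1-a$ lies in the two-sided ideal generated by the nonzero annihilator $r(a)$, and likewise in the one generated by $l(a)$. Viewing $a$ as the endomorphism $\lambda_a\colon x\mapsto ax$ of $R_R$, its kernel is $r(a)$ and $\lambda_a$ carries a complement of $r(a)$ isomorphically onto $aR$; thus, the unit case aside, writing $a$ as a product of idempotents amounts to realizing this isomorphism between two direct summands of $R_R$ as a composite of idempotent endomorphisms of $R_R$, with the nonzero kernel $r(a)$ providing the room to do so.

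To carry this out I would use the module-theoretic machinery available in a right self-injective regular ring: the lattice of principal right ideals is complete, $R$ splits according to its type, finitely generated projectives over the directly finite part cancel, and over the properly infinite part one has $2\times2$ matrix structure. The equality $1-a\in Rr(a)$ forces, componentwise, the kernel $r(a)$ to be large enough to absorb the failure of $a$ to be invertible: on the directly finite summands (types $\mathrm I_f$ and $\mathrm{II}_f$) it forces $a$ to agree with the identity off a piece dominated by $r(a)$, collapsing the factorization to a bounded product of idempotents; on the properly infinite summands one obtains a short explicit factorization by an Erd\H{o}s-type argument inside the $2\times2$ matrix picture. Patching these pieces, using injectivity to promote the local data to genuine idempotents of $R$, exhibits $a$ as a finite product of idempotents. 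The step I expect to be hardest is exactly this patching: translating the ideal equality $R(1-a)R=Rr(a)=l(a)R$ into concrete isomorphisms of summands that are uniform across all type components and that yield a number of idempotent factors bounded independently of the component --- this is the place where injectivity, rather than mere regularity, is indispensable.
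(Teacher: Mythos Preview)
The paper does not contain a proof of this theorem. It is stated in the introduction as a known result of Hannah and O'Meara (reference~\cite{HO} in the paper), cited as background for the characterization of separativity that follows, and no argument is given. There is therefore no ``paper's own proof'' to compare your proposal against.

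On the substance of your proposal: the forward implication is clean and correct --- the reduction modulo $Rr(a)$ and the observation that a one-sided invertible idempotent equals $1$ do the job, and this part does not need self-injectivity. For the converse you give only a strategic outline: split $R$ by type, handle the directly finite and properly infinite summands separately, and patch. That is indeed the architecture of the Hannah--O'Meara argument, but you have not carried out any of the actual steps --- in particular you have not shown how the ideal equality $1-a\in Rr(a)\cap l(a)R$ converts into the specific summand comparisons needed on each type component, nor how to bound the number of idempotent factors. The step you yourself flag as hardest is left as an intention. So the converse remains a sketch rather than a proof; if you want a complete argument you will need to consult~\cite{HO} directly.
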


The following remarkable equivalence is due to O'Meara (\cite{O}).

\begin{theorem}
Let $R$ be a regular ring. Then $R$ is separative if and only for every non
unit $a\in R$, conditions (1.1) and (1.2) are equivalent.
\end{theorem}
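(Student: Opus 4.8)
Fix a reflexive quasi-inverse $a'$ of $a$ (so $aa'a=a$ and $a'aa'=a'$) and put $e=aa'$, $f=a'a$. These are idempotents with $aR=eR$, $l(a)=R(1-e)$, $r(a)=(1-f)R$, and $\lambda_a$ restricts to an isomorphism $fR\to aR$; hence $[R]=[r(a)]+[aR]=[R/aR]+[aR]$ in $V(R)$. In $R/R(1-a)R$ one has $\bar a=1$, hence $\overline{a'}=\overline{a'aa'}=1$, and therefore $l(a)\subseteq R(1-a)R$ and $r(a)\subseteq R(1-a)R$; thus $l(a)R\subseteq R(1-a)R$ and $Rr(a)\subseteq R(1-a)R$ in any ring, so (1.1) is equivalent to the two reverse inclusions, i.e.\ to the statement that the modules $R/aR$ and $r(a)$ each generate the ideal $R(1-a)R$. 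With this reformulation, $(1.2)\Rightarrow(1.1)$ holds in \emph{every} regular ring: if $a=e_1\cdots e_n$, the telescoping identity $1-a=\sum_{j}e_1\cdots e_{j-1}(1-e_j)$ gives $R(1-a)R\subseteq\sum_j R(1-e_j)R$; conversely, in $R/R(1-aa')R$ the element $\overline{aa'}$ equals $1$, so $\bar a=\bar e_1\cdots\bar e_n$ is right invertible, which forces $\bar e_j=1$ for every $j$ (a right-invertible idempotent equals $1$), whence $1-e_j\in R(1-aa')R=l(a)R$ and $R(1-a)R\subseteq l(a)R$; the symmetric argument using $R(1-a'a)R$ gives $R(1-a)R\subseteq Rr(a)$. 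Hence the theorem reduces to proving: $R$ is separative $\iff$ for every nonunit $a\in R$, (1.1) $\Rightarrow$ (1.2).

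($\Rightarrow$) Assume $R$ separative and let $a$ be a nonunit satisfying (1.1); then $r(a)\neq 0$ and $R/aR\neq 0$, since otherwise $Rr(a)=0$ or $l(a)R=0$, forcing $R(1-a)R=0$ and $a=1$. The goal is to realise $\lambda_a\in\operatorname{End}(R_R)$ as a composite of idempotent endomorphisms (equivalently, of principal projections), which is the module-theoretic analogue of Erd\H{o}s's theorem that a singular square matrix is a product of idempotent matrices. Following the pattern of the self-injective case (Theorem 1.1 above), the mechanism is an inductive extraction $a=e_1a_1=e_1e_2a_2=\cdots$ of idempotent factors, each step enabled by an isomorphism between suitable ``kernel'' and ``cokernel'' modules built from the current factor; condition (1.1) is what makes such isomorphisms available, since it forces $r(a)$, $R/aR$ and the defect module of $1-a$ to generate the same ideal of $R$, and separativity then upgrades the stable isomorphism $r(a)\oplus aR\cong R\cong (R/aR)\oplus aR$ to a genuine one. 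The core difficulty — which I expect to be the main obstacle — is to pin down exactly which cancellation is performed at each step and to verify that (1.1) supplies the subisomorphism hypotheses needed to apply the standard cancellation form of separativity: in general $aR$ is not subisomorphic to a multiple of $r(a)$, so one must carry out the cancellation \emph{inside} the ideal $R(1-a)R$, or first modify $a$ by a suitable idempotent (or pass to a matrix ring over $R$), to bring separativity to bear.

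($\Leftarrow$) Suppose (1.1) $\Rightarrow$ (1.2) holds for every nonunit, and let $M,N$ be finitely generated projective with $M\oplus M\cong M\oplus N\cong N\oplus N$; we must show $M\cong N$. Replacing $R$ by a matrix ring if necessary — which changes neither the standing hypothesis nor the conclusion, both being equivalent to separativity of that ring — we may assume $M=eR$, $N=fR$ for idempotents $e,f\in R$. The relations $2[M]=2[N]=[M]+[N]$ in $V(R)$ allow one to manufacture a nonunit $a\in R$ (built from the given isomorphisms, e.g.\ as a carefully chosen endomorphism of $M\oplus M\oplus N$) whose left and right annihilators encode $M$ and $N$ in such a way that (1.1) holds automatically. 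By hypothesis $a$ is then a product of idempotents, and feeding that factorisation back through the defining relations of $a$ yields $M\cong N$, so $R$ is separative. Here the delicate point is the explicit construction of $a$: it must be rigged so that (1.1) is forced by the isomorphism data, while any idempotent factorisation of $a$ is strong enough to recover $M\cong N$. Identifying such an element and verifying both of its required properties is, together with the cancellation step of the forward direction, where the real work lies.
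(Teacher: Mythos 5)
This statement is O'Meara's theorem, which the paper does not prove but simply cites from \cite{O}; so there is no in-paper argument to compare against, and your proposal has to stand on its own. Your preliminary reductions are correct and are indeed the standard first moves: with $e=aa'$, $f=a'a$ one has $l(a)=R(1-e)$, $r(a)=(1-f)R$, both contained in $R(1-a)R$, so (1.1) amounts to the two reverse inclusions; and your argument that (1.2) always implies (1.1) in a regular ring (telescoping $1-e_1\cdots e_n=\sum_j e_1\cdots e_{j-1}(1-e_j)$ for one inclusion, and ``a right-invertible idempotent is $1$'' applied in $R/R(1-aa')R$ and $R/R(1-a'a)R$ for the other) is sound. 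So the theorem correctly reduces to: $R$ is separative iff (1.1) implies (1.2) for every nonunit $a$.

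However, from that point on the proposal is not a proof. In the forward direction you describe the intended mechanism (an inductive extraction of idempotent factors, with separativity upgrading the stable isomorphism $r(a)\oplus aR\cong R\cong (R/aR)\oplus aR$) but you explicitly concede that you do not know which cancellation is performed at each step nor how (1.1) supplies the subisomorphism hypotheses needed to invoke separative cancellation; that inductive step \emph{is} the theorem, and it is genuinely delicate (O'Meara's argument occupies most of his paper). In the converse direction you likewise assert that one can ``manufacture'' a nonunit $a$ from the isomorphisms $M\oplus M\cong M\oplus N\cong N\oplus N$ so that (1.1) holds automatically and any idempotent factorisation of $a$ forces $M\cong N$, but you give no such element and acknowledge that identifying and verifying it is ``where the real work lies.'' Both substantive implications are therefore left as placeholders, and the proposal should be regarded as a correct framing of the problem rather than a proof.
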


The right maximal quotient ring of a right nonsingular ring is a von Neumann
regular right self injective ring. In particular, the above two theorems
show that the right maximal quotient ring $Q$ of a regular ring is always
separative. So any regular ring $R$ can be embedded in a regular separative
ring viz. its maximal right quotient ring $Q(R)$. We are thus interested on
the question when the separativity property goes down from $Q(R)$ to $R$? In
the next section we will give some sufficient conditions for this to happen.

\bigbreak

\section{Main result}

\smallbreak
The following lemma is a key result that will be invoked at several places.

\begin{lemma}
\label{going to idempotent} Let $R$ be a regular ring and $Q=Q(R)$ be its
right maximal quotient ring. If for any finite set of idempotents, $%
e_1,\dots,e_n, f_1,\dots, f_l$ in $R$, $\prod_{i=1}^n e_iQ\cong
\prod_{j=1}^l f_jQ$ implies $\prod_{i=1}^n e_iR\cong \prod_{j=1}^l f_jR$,
then the ring $R$ is separative.
\end{lemma}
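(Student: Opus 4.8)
The plan is to verify the defining property of separativity directly, using the already‑known separativity of $Q=Q(R)$ to produce an isomorphism ``upstairs'' and then using the hypothesis of the lemma to push it back down to $R$. So let $M$ and $N$ be finitely generated projective right $R$‑modules satisfying $M\oplus N\cong M\oplus M\cong N\oplus N$; the goal is $M\cong N$. First I would invoke the standard fact that over a von Neumann regular ring every finitely generated projective right module is isomorphic to a finite direct sum of principal right ideals generated by idempotents, and write $M\cong\bigoplus_{i=1}^{n}e_iR$ and $N\cong\bigoplus_{j=1}^{l}f_jR$ with $e_i,f_j\in R$ idempotents.

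Next I would extend scalars along the inclusion $R\hookrightarrow Q$. Every regular ring is right nonsingular (if $0\neq a\in Z(R_R)$ then, writing $a=aba$, one gets $r(a)=(1-ba)R$, a direct summand that is essential, hence all of $R$, forcing $a=0$), so, as recalled in the introduction, $Q=Q(R)$ is a von Neumann regular right self‑injective ring and is therefore separative. Applying the additive functor $-\otimes_{R}Q$, which commutes with finite direct sums, carries isomorphisms to isomorphisms, and satisfies $eR\otimes_{R}Q\cong eQ$ as right $Q$‑modules for every idempotent $e\in R$ (no flatness of $Q$ is needed, since $eR$ is a direct summand of $R$), the three $R$‑isomorphisms above yield, after setting $P=\bigoplus_{i=1}^{n}e_iQ$ and $P'=\bigoplus_{j=1}^{l}f_jQ$,
\[
P\oplus P'\ \cong\ P\oplus P\ \cong\ P'\oplus P'
\]
as right $Q$‑modules. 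Separativity of $Q$ now gives $P\cong P'$, that is $\bigoplus_{i=1}^{n}e_iQ\cong\bigoplus_{j=1}^{l}f_jQ$.

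Finally, the hypothesis of the lemma applies verbatim to the finite families of idempotents $e_1,\dots,e_n$ and $f_1,\dots,f_l$ and yields $\bigoplus_{i=1}^{n}e_iR\cong\bigoplus_{j=1}^{l}f_jR$, i.e. $M\cong N$; hence $R$ is separative. The argument is essentially formal, so I do not expect a genuine obstacle: the only steps needing (routine) care are the decomposition of finitely generated projectives over a regular ring into idempotent‑generated right ideals and the identification $eR\otimes_{R}Q\cong eQ$ together with the observation that tensoring leaves the three isomorphisms intact. The real content has been packaged into the hypothesis, which is tailored precisely so that the separativity of the maximal right quotient ring descends to $R$. (If one preferred to avoid scalar extension, one could instead note that $e_iQ$ is an injective hull of $e_iR$ and argue with injective hulls of the relevant direct sums, but the tensor‑product formulation is cleaner.)
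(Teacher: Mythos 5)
Your argument is exactly the paper's: decompose $M$ and $N$ into idempotent-generated principal right ideals, tensor with $Q$, invoke separativity of the right self-injective regular ring $Q$ to get $M\otimes_R Q\cong N\otimes_R Q$, and then apply the hypothesis to descend to $M\cong N$. The extra care you take (nonsingularity of regular rings, $eR\otimes_R Q\cong eQ$ via $eR$ being a direct summand) only makes explicit what the paper leaves implicit; the proof is correct.
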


\begin{proof}
Suppose that, for any idempotents $e_1,e_2,\dots e_n$ and $f_1,f_2,\dots
,f_l $ of $R$ we have $\prod_{i=1}^n e_iQ \cong \prod_{j=1}^l f_jQ$ implies
that $\prod_{i=1}^n e_iR\cong \prod_{j=1}^l f_jR$. Since the ring $R$ is
regular, finitely generated projective right $R$-modules are isomorphic to
direct sums of principal ideals of the form $eR$. Suppose $M,N$ are two
right finitely generated projective $R$-modules such that 
\begin{equation*}
M\oplus N \cong M\oplus M\cong N\oplus N.
\end{equation*}

Tensoring with $Q=Q_{max}(R)$ over $R$, we get 
\begin{equation*}
M\otimes Q\oplus N\otimes Q \cong M\otimes Q\oplus M\otimes Q\cong N\otimes
Q\oplus N\otimes Q.
\end{equation*}
Since $M\otimes Q$ and $N\otimes Q$ are also finitely generated, the fact
that $Q$ is separative implies that $M\otimes Q \cong N\otimes Q $. Since
the ring $R$ is regular, we can write finitely generated projective right $R$%
-modules as $M\cong \prod_{i=1}^n e_iR$ and $N\cong\prod_{j=1}^l f_jR$. We
then have $M\otimes Q\cong \prod e_iR\otimes Q \cong\prod e_iQ$ and
similarly $N\otimes Q \cong \prod f_j Q$. Therefore $\prod_{i=1}^n e_i Q
\cong \prod_{j=1}^l f_jQ$. So by hypothesis $\prod_{i=1}^n e_iR \cong
\prod_{j=1}^l f_j R$ hence $M\cong N$ and the ring is separative.
\end{proof}

\begin{remark}
(see Lam Exercise 21.13 \cite{L2}) This may be of the interest for the
reader to know that if $R$ is a subring of a ring $Q$ (say for example right
maximal quotients of $R$) and if $eR\cong fR$ for any two idempotents $e,f$
in $R$ then $eQ\cong fQ$. However, the converse need not be true.
\end{remark}

For presenting the first case as when a regular ring is separative we need
the notion of pseudo injectve module now known as auto invariant modules. A
module $M$ is called pseudo injective if every monomorphism from a submodule 
$N$ of $M$ can be extended to an endomorphism of $M$ (\cite{JS}). It has
been shown in \cite{ESS} that $M$ is pseudo injcetive if and only if $M$ is
invariant under automorphism of $E(M)$ where $E(M)$ is the injective hull of 
$M$ (See the recent book \cite{STG} ). In view of this, pseudo injective
modules are called auto invariant modules. Note that auto invariant modules
need not be CS, as is the case for quasi continuous ($\pi$-injective), quasi
injective and injective modules. We illustrate this by first proving the
following lemma. This result is in proposition 3.2, Chapter 12 in \cite{S} .
We offer here a direct simple proof.

\begin{lemma}
The maximal quotient ring of a Boolean ring is again Boolean.
\end{lemma}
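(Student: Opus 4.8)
The plan is to prove directly that every element of $Q=Q_{\max}(R)$ is idempotent; since a ring all of whose elements are idempotent is automatically commutative of characteristic $2$, this will show that $Q$ is Boolean.

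First I would record the ambient facts. A Boolean ring satisfies $a=a\cdot a\cdot a$ for every $a$, so it is von Neumann regular, and of course commutative; being regular it is right nonsingular. Hence $Q=Q_{\max}(R)$ exists, is von Neumann regular and right self-injective, contains $R$ as a dense (rational) right $R$-submodule, and is right nonsingular. In particular every dense right ideal $D$ of $R$ is essential in $R_R$, we have $R_R\le_e Q_R$, and $Z(Q_R)=0$.

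The key preliminary is that $R$ is central in $Q$. I would prove this using the description of $Q$ by partial right $R$-linear maps: represent $q\in Q$ as the class of a homomorphism $f\colon D\to R_R$ with $D$ a dense right ideal of $R$. For $a\in R$, on the dense right ideal $D\cap(D:a)$ the element $aq$ acts by $r\mapsto af(r)$, while $qa$ acts by $r\mapsto f(ar)=f(ra)=f(r)a=af(r)$, where we used that $R$ is commutative and that $f$ is right $R$-linear. Thus $aq$ and $qa$ agree on a dense right ideal, so $aq=qa$. (Alternatively one may simply quote that the maximal quotient ring of a commutative ring is commutative.) I expect this to be the main point of the proof, since it is the only place where commutativity of $R$ is genuinely exploited; everything else is formal.

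Finally, fix $q\in Q$ and put $D_q=\{r\in R:qr\in R\}$, a dense, hence essential, right ideal of $R$. For each $r\in D_q$ the element $qr$ lies in $R$, so $(qr)^2=qr$ because $R$ is Boolean. On the other hand, using that $r$ is central in $Q$ and that $r^2=r$,
\[
(qr)^2=qrqr=q(rq)r=q(qr)r=q^2r^2=q^2r .
\]
Comparing the two expressions gives $(q^2-q)r=0$ for every $r\in D_q$, so $\operatorname{ann}_R(q^2-q)\supseteq D_q$ is essential in $R_R$, whence $q^2-q\in Z(Q_R)=0$. Therefore $q^2=q$ for all $q\in Q$, and $Q$ is Boolean. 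The only thing requiring care is the bookkeeping with essentiality and nonsingularity, but this all follows from regularity of $R$ together with standard properties of rings of quotients.
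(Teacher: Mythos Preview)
Your argument is correct and is essentially the same as the paper's: both pick an essential right ideal $D$ with $qD\subseteq R$, expand $(qr)^2=qr$ using $rq=qr$ and $r^2=r$ to get $(q^2-q)D=0$, and conclude $q^2=q$ by nonsingularity. The only difference is that the paper simply asserts that $Q$ is commutative (``as is well-known''), whereas you spell out the centrality of $R$ in $Q$ and the nonsingularity step in more detail.
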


\begin{proof}
Let $B$ be a Boolean ring and $Q=Q(B)$ its right maximal quotient ring. As
is well-known and easy to check, $R$ and $Q$ are both commutative. For any $%
q\in Q$ there exists a right essential ideal $E$ of $R$ such that $qE\subset
B$. So for any $x\in E$, $(qx)^2=qx$. This gives $qx=q^2x^2=q^2x$ for every $%
x\in E$. This leads to $(q^2-q)E=0$ and hence $q^2=q$.
\end{proof}

As a consequence we get the following lemma.

\begin{lemma}
Every boolean ring is pseudo injective but not necessarily CS.
\end{lemma}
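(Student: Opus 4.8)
The plan is to combine the preceding lemma ($Q(B)$ is again Boolean) with the characterization recalled above (from \cite{ESS}): a module $M$ is pseudo injective if and only if it is invariant under every automorphism of its injective hull $E(M)$. So, writing $Q=Q(B)$, it suffices to prove that $E(B_B)$ admits no automorphism other than the identity. Since a Boolean ring $B$ is commutative and von Neumann regular (indeed $a=a^{2}=a\cdot a\cdot a$ for every $a\in B$), it is nonsingular, so $E(B_B)=Q$; and by the preceding lemma $Q$ is Boolean, hence its only invertible element is $1$ (from $u^{2}=u$ and $u$ a unit one gets $u=1$).

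The core step is to identify $\mathrm{End}_B(Q)=Q$, acting by left multiplications. Given $\varphi\in\mathrm{End}_B(Q)$, set $q=\varphi(1)$. For $x\in Q$ the set $E=\{\,r\in B : xr\in B\,\}$ is an essential right ideal of $B$ (since $B_B$ is a dense, in particular essential, submodule of $Q_B$), hence essential in $Q_B$; and on $E$ one computes $\varphi(x)r=\varphi(xr)=q(xr)=(qx)r$, so $(\varphi(x)-qx)E=0$. As $Q_B$ is nonsingular --- being the injective hull of the nonsingular module $B_B$ --- this forces $\varphi(x)=qx$. Thus every automorphism of $E(B_B)=Q$ is left multiplication by a unit of $Q$, and there is only one such, the identity. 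Consequently $B$ is trivially invariant under $\mathrm{Aut}(E(B_B))$, i.e. $B$ is pseudo injective.

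For the negative assertion I would exhibit a Boolean ring that is not CS. Take $B=\{\,(x_n)_{n\in\NN}\in\prod_{n\in\NN}\FF_2 : (x_n)\text{ is eventually constant}\,\}$ (equivalently, the subring of $\prod_{\NN}\FF_2$ generated by $\bigoplus_{\NN}\FF_2$ and $1$), and fix a partition $\NN=A\sqcup A^{c}$ into two infinite sets. Let $I=\{\,x\in B : \mathrm{supp}(x)\subseteq A\,\}$; this is an ideal of $B$, it is infinite, and it is a closed (complement) submodule of $B_B$: if $I\subsetneq J\le B_B$ and $y\in J\setminus I$, then $y_j=1$ for some $j\in A^{c}$, so $e_j=y\,e_j\in J$ and $e_jB=\{0,e_j\}$ is a nonzero submodule of $J$ with $e_jB\cap I=0$, whence $I$ is not essential in $J$. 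On the other hand $I$ is not a direct summand of $B_B$: a direct summand of $B_B$ has the form $eB$ with $e=e^{2}$, but every idempotent contained in $I$ has finite support while $I$ is infinite. Hence $B_B$ has a closed submodule that is not a direct summand, so $B$ is not CS.

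The main obstacle is the identification $\mathrm{End}_B(Q)=Q$, which hinges on $Q_B$ being nonsingular and on the density of $B$ in $Q$ (used to see that $E$ is essential); once that is in place, pseudo injectivity of $B$ is immediate, and in the example the only delicate point is verifying that the chosen ideal $I$ is genuinely essentially closed.
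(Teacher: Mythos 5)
Your proof is correct and, for the pseudo-injectivity half, takes essentially the same route as the paper: invoke the preceding lemma to see that $Q=Q(B)$ is Boolean, note that the only unit of a Boolean ring is $1$, and conclude via the \cite{ESS} characterization that $B$ is trivially invariant under $\mathrm{Aut}(E(B_B))$; your explicit verification that $\mathrm{End}_B(Q)=Q$ acting by left multiplication is a standard fact the paper takes for granted. For the non-CS half you use the same example the paper points to (Lambek's ring of finite and cofinite subsets of $\NN$, in the guise of eventually constant $\FF_2$-sequences), but your justification is actually more complete than the paper's: you exhibit a concrete essentially closed ideal $I=\bigoplus_{n\in A}\FF_2$ (for $A$ infinite and co-infinite) and check both that it is closed and that it cannot equal $eB$ for any idempotent $e$, whereas the paper's remark only observes that the injective hull is the full power set, which by itself rules out self-injectivity rather than the CS property. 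So your write-up fills a genuine gap in the paper's sketch; no corrections are needed.
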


\begin{proof}
Since the quotient ring $Q=Q(R)$ is Boolean, the only invertible element of $%
Q$ is the element $1$. Hence we obtain that the units of $Q$ all belong to $%
R $. This implies that the ring $R$ is auto invariant and hence pseudo
injective. A boolean ring need not be CS as is shown in the remark below.
\end{proof}

\begin{remark}
A boolean ring need not be CS or continuous but it is always pseudo
injective (auto-invariant). We give two examples of this situation. The
first one comes from Lambek's book (\cite{Lamb}, p. 45): let $R$ be the
boolean ring of finite and cofinite susbet of the natural numbers $\mathbb{N}
$, with usual operations of intersections and unions. This ring is Boolean,
hence commutative, regular and pseudo injective. It is not CS (equivalently
continuous or quasi continuous) since its injective hull is the class of all
subsets of natural numbers.

The second example is extracted from Goodearl (cf. \cite{G}, p. 174): we
define $Q$ to be the $\mathbb{F}_2$ algebra $Q=\prod_{i\in \mathbb{R}}F_i$,
where for every real number $i$, $F_i=\mathbb{F}_2$. Let $R$ be the $\mathbb{%
F}_2$ subalgebra of $Q$ generated by $1$ and the set 
\begin{equation*}
J=\left\lbrace x\in Q \mid x_i=0 \;\mathrm{for\; all\; but\; countably \;
many\;} i\in \mathbb{R}\right\rbrace .
\end{equation*}
Then $R$ and its maximal ring of quotients $Q$ are both Boolean (hence unit
regular) and so $R$ is pseudo injective. However $R$ is not CS (equivalently
not quasi continuous) since all idempotents of $Q$ do not belong to $R$. Let
us remark that this is yet another example of a ring which is pseudo
injective but not quasi injective.
\end{remark}

Our next result is based on the following lemma. Its proof can be found in (%
\cite{JS}, lemma 3.2).

\begin{lemma}
\label{M pseudo injective and image of intersection} Let $M$ be a right
pseudo injective nonsingular module with injective hull $E(M)$. If $N$ is a
closed submodule of $E(M)$ and $\sigma$ is an injective morphism from $N$
into $E(M)$, then $\sigma(N\cap M)=\sigma(N) \cap M$.
\end{lemma}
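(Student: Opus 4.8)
The plan is to establish the two inclusions $\sigma(N\cap M)\subseteq \sigma(N)\cap M$ and $\sigma(N)\cap M\subseteq \sigma(N\cap M)$ separately, the first being the one where the hypotheses really get used. I would begin by recording a few routine facts. Since $M$ is nonsingular and essential in $E=E(M)$, the module $E$ is nonsingular (the singular submodule of $E$ meets $M$ in $Z(M)=0$, and an essential submodule cannot meet a submodule trivially unless that submodule is $0$). Since $N$ is a closed submodule of the nonsingular injective module $E$, the injective hull of $N$ embeds over $N$ into $E$, hence must coincide with $N$; so $N$ is injective, and likewise $\sigma(N)\cong N$ is injective. (This symmetry between $N$ and $\sigma(N)$ is where closedness pays off in the second inclusion; the first inclusion does not actually need it.)

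For the inclusion $\sigma(N\cap M)\subseteq M$, the key idea is \emph{not} to try to extend $\sigma$ on all of $N$ — it need not extend to an automorphism of $E$, since the complements of $N$ and of $\sigma(N)$ in $E$ need not be isomorphic — but to extend only a carefully chosen essential restriction on which $\sigma$ demonstrably lands in $M$. Put $L=\sigma^{-1}\big(\sigma(N)\cap M\big)\subseteq N$. Because $M$ is essential in $E$, $\sigma(N)\cap M$ is essential in $\sigma(N)$, and since $\sigma\colon N\to \sigma(N)$ is an isomorphism, $L$ is essential in $N$; also $N\cap M$ is essential in $N$, so $L\cap N\cap M$ is essential in $N$ and in particular in $N\cap M$. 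By construction $\sigma$ maps $L\cap N\cap M$ into $\sigma(L)=\sigma(N)\cap M\subseteq M$, so $\sigma$ restricts to a monomorphism from the submodule $L\cap N\cap M$ of $M$ into $M$. Pseudo-injectivity of $M$ now yields an endomorphism $\gamma$ of $M$ extending this restriction.

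It remains to upgrade the equality $\gamma=\sigma$ from $L\cap N\cap M$ to all of $N\cap M$: the homomorphism $\gamma|_{N\cap M}-\sigma|_{N\cap M}\colon N\cap M\to E$ vanishes on the essential submodule $L\cap N\cap M$, so its kernel is essential in $N\cap M$ and its image is a singular submodule of the nonsingular module $E$, hence zero. Therefore $\sigma(N\cap M)=\gamma(N\cap M)\subseteq \gamma(M)\subseteq M$, giving $\sigma(N\cap M)\subseteq \sigma(N)\cap M$. For the reverse inclusion, I would simply apply the argument just given to the closed submodule $\sigma(N)$ of $E$ and the injective morphism $\sigma^{-1}\colon \sigma(N)\to E$, obtaining $\sigma^{-1}\big(\sigma(N)\cap M\big)\subseteq M$, i.e.\ $\sigma^{-1}\big(\sigma(N)\cap M\big)\subseteq N\cap M$; applying $\sigma$ gives $\sigma(N)\cap M\subseteq \sigma(N\cap M)$, and the two inclusions together finish the proof. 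I expect the only real obstacle to be the conceptual one already flagged: recognizing that one should extend $\sigma$ on the auxiliary essential submodule $L\cap N\cap M$ rather than on $N$, after which nonsingularity does the bookkeeping automatically.
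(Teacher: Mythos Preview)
Your argument is correct. The paper, however, does not give its own proof of this lemma: it simply states the result and refers the reader to \cite{JS}, Lemma~3.2, for the proof. So there is no in-paper argument to compare against.

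For what it is worth, your approach is exactly in the spirit of the original Jain--Singh argument: restrict $\sigma$ to an essential submodule of $N\cap M$ on which it already lands in $M$, invoke pseudo-injectivity to extend to an endomorphism $\gamma$ of $M$, and then use nonsingularity of $E(M)$ to pass from agreement on an essential submodule to agreement on all of $N\cap M$. Your observation that closedness of $N$ forces $N$ (and hence $\sigma(N)$) to be injective, so that the same argument applies symmetrically to $\sigma^{-1}$, is the right way to get the reverse inclusion. The only place one might quibble is stylistic: you could have shortened the setup by noting immediately that in a nonsingular injective module, closed submodules are direct summands, so both $N$ and $\sigma(N)$ are summands of $E(M)$; but this is not needed for the argument as you wrote it.
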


\begin{proposition}
A regular right self pseudo injective ring is separative.
\end{proposition}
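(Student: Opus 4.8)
The plan is to apply Lemma \ref{going to idempotent}: it suffices to show that for idempotents $e_1,\dots,e_n,f_1,\dots,f_l$ in $R$, an isomorphism $\bigoplus_{i=1}^n e_iQ \cong \bigoplus_{j=1}^l f_jQ$ over $Q=Q(R)$ forces $\bigoplus_{i=1}^n e_iR \cong \bigoplus_{j=1}^l f_jR$ over $R$. Since $R$ is regular, the right $R$-module $M=\bigoplus e_iR$ is projective and, being a direct sum of cyclic submodules of $R^n$, sits inside $Q^n$; set $M' = \bigoplus f_jR \subseteq Q^l$. The point is that $M\otimes_R Q \cong MQ$, the $Q$-submodule of $Q^n$ generated by $M$, and likewise $M'\otimes_R Q\cong M'Q$; these are themselves finitely generated projective, hence direct summands of $Q^n$ (resp. $Q^l$), so they are \emph{closed} (= complement, = essentially closed) submodules of the free modules $Q^n, Q^l$. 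Because $Q$ is nonsingular and $MQ$ is a summand of $Q^n$, $MQ$ is an injective module and is the injective hull of $M$ inside $Q^n$; similarly for $M'Q$ and $M'$.

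Next I would transport the given $Q$-isomorphism $\sigma\colon MQ \xrightarrow{\ \sim\ } M'Q$ and use the pseudo-injectivity hypothesis. Here is the mechanism: the free module $R^N$ (for $N$ large enough to contain both $n$ and $l$ as summands, say $N=n+l$) is a direct summand of the right self-pseudo-injective ring $R$'s... more precisely, since $R$ is right self pseudo-injective, every finite direct sum $R^N$ is pseudo-injective as well (pseudo-injectivity passes to finite direct sums in the nonsingular case — or one works with $R^N$ as a submodule of $Q^N = E(R^N)$). Then $MQ$ and $M'Q$ are closed submodules of $Q^N = E(R^N)$, and $\sigma$, extended by zero, is an injective morphism from the closed submodule $MQ$ of $Q^N$ into $Q^N$. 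Lemma \ref{M pseudo injective and image of intersection} applied to the pseudo-injective nonsingular module $R^N$ then gives $\sigma(MQ \cap R^N) = \sigma(MQ)\cap R^N = M'Q\cap R^N$. But $MQ\cap R^N = M$ and $M'Q\cap R^N = M'$ (a cyclic ideal $eR$ is the intersection of $eQ$ with $R$, and this is stable under finite direct sums), so $\sigma$ restricts to an isomorphism $M \cong M'$, i.e. $\bigoplus e_iR \cong \bigoplus f_jR$, as required.

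The main obstacle I anticipate is the bookkeeping needed to present $\sigma$ as an injective morphism defined on a \emph{closed} submodule of the injective hull of a \emph{single} pseudo-injective nonsingular module, so that Lemma \ref{M pseudo injective and image of intersection} applies cleanly. Two technical points carry the weight: first, that $R$ right self pseudo-injective implies $R^N$ is pseudo-injective (equivalently, $R$ is auto-invariant, and one needs auto-invariance to be inherited by finite free modules — which holds since $E(R^N)=E(R)^N$ and automorphisms can be handled componentwise, or via the characterization in \cite{ESS}); and second, that $MQ$ is genuinely closed in $Q^N$ and that $MQ\cap R^N = M$. Both are consequences of regularity and nonsingularity (summands are closed; $Q$ is flat over $R$ so $M\otimes_R Q$ embeds as $MQ$; and $eQ\cap R = eR$ because $R_R$ is essential in $Q_R$), but they should be stated carefully. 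Once these are in place, the argument is the direct combination of Lemma \ref{going to idempotent} with Lemma \ref{M pseudo injective and image of intersection}.
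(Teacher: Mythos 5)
Your overall strategy---reduce to Lemma \ref{going to idempotent} and then use Lemma \ref{M pseudo injective and image of intersection} to show that the $Q$-isomorphism $\sigma$ carries $\bigoplus_i e_iR$ onto $\bigoplus_j f_jR$---is the same as the paper's, but there is a genuine gap at the step where you apply Lemma \ref{M pseudo injective and image of intersection} to the module $R^N$. For that you need $R^N$ to be pseudo-injective (automorphism-invariant), and your parenthetical justification (``pseudo-injectivity passes to finite direct sums in the nonsingular case; automorphisms can be handled componentwise since $E(R^N)=E(R)^N$'') is not valid. Automorphisms of $E(R)^N$ are emphatically not componentwise: for any $f\in \mathrm{End}(E(R))$ the elementary matrix $\left(\begin{smallmatrix}1&0\\ f&1\end{smallmatrix}\right)$ is an automorphism of $E(R)^2$, and invariance of $R\oplus R$ under it forces $f(R)\subseteq R$. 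Hence if $R^2$ (a fortiori $R^N$ for $N\ge 2$) were automorphism-invariant, $R_R$ would be invariant under all of $\mathrm{End}(E(R_R))\cong Q$, i.e.\ quasi-injective; for a nonsingular ring this means $QR\subseteq R$, so $R=Q$ is right self-injective. In other words your intermediate claim holds only in the already-known self-injective case, and it fails for the paper's own examples: the Boolean ring of finite and cofinite subsets of $\mathbb{N}$ is pseudo-injective but not self-injective, so its square is not pseudo-injective.

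The paper avoids this by never leaving the single module $R$: it restricts $\sigma$ to each summand $e_iQ$, which is a closed submodule of $Q=E(R_R)$, applies Lemma \ref{M pseudo injective and image of intersection} with $M=R$ to each injective map $\sigma_i=\sigma|_{e_iQ}$ separately to get $\sigma_i(e_iR)=\sigma_i(e_iQ)\cap R^{n+l}$, and then assembles the direct sum. To repair your argument you should do the same: work one $e_iQ$ at a time, so that the closed submodule lives in $E(R)$ itself and only the pseudo-injectivity of $R_R$ is invoked, rather than trying to promote pseudo-injectivity to $R^N$. (Even in that form one should check that the lemma applies when the target of $\sigma_i$ is $Q^{n+l}$ rather than $Q$, and that intersecting with $R^{n+l}$ distributes over the direct sum $\bigoplus_i \sigma_i(e_iQ)$---points the paper passes over quickly---but those are manageable bookkeeping issues; the claim that $R^N$ is pseudo-injective is not.)
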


\begin{proof}
Let $Q$ be the right maxiaml quotient ring of $R$. We will again use Lemma %
\ref{going to idempotent}. So assume that $\{e_i,f_j\in R\mid 1\le i\le n,
1\le j \le l\}$ are idempotents in $R$ such that $\prod_{i=1}^n e_iQ\cong
\prod_{j=1}^l f_jQ$, and denote this isomrphism by $\sigma$. We can consider
both $\prod_{i=1}^n e_iQ$ and $\prod_{j=1}^l f_jQ$ as submodules of $Q^{n+l}$%
. We denote by $\sigma_i$ the restriction of $\sigma$ to $e_iQ$. $%
\sigma(\prod e_iR)=\prod \sigma_i(e_iR)=\prod \sigma_i(e_iQ\cap R)$. Since $%
R $ is preudo injective we use the above lemma \ref{M pseudo injective and
image of intersection} and get $\prod \sigma_i(e_iQ\cap R) =\prod
(\sigma_i(e_iQ) \cap R)=(\prod \sigma_i(e_iQ))\cap R^{n+l}=\sigma
(\prod(e_iQ))\cap R^{n+l}= \prod f_jQ \cap R^{n+l}=\prod f_jR$. Then Lemma %
\ref{going to idempotent} finishes the proof. %
\end{proof}

\vspace{3mm}

Next we will give another sufficient condition for a regular ring to be
separative. For another application of the above lemma \ref{going to
idempotent} we introduce the following definition.

\begin{definition}
\label{Closure extension property} A nonsingular ring $R$ is said to have
the closure extension property if any $R$-isomorphism between two right
ideals of $R$ 
can be extended to an isomorphism between their essential closures in $R$.
\end{definition}

\begin{proposition}
\label{Closure extension property implies separativity} Let $R$ be a regular
ring and suppose that $R$ satisfies the closure extension property. Then $R$
is separative.
\end{proposition}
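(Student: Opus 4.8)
The plan is to apply Lemma \ref{going to idempotent}. Thus, writing $Q=Q(R)$, it suffices to take idempotents $e_1,\dots ,e_n,f_1,\dots ,f_l$ of $R$ together with an isomorphism $\sigma\colon \prod_{i=1}^n e_iQ\to \prod_{j=1}^l f_jQ$ and to produce from it an isomorphism $\prod_{i=1}^n e_iR\cong \prod_{j=1}^l f_jR$; Lemma \ref{going to idempotent} then yields separativity.

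First I would fix a common ambient module. I regard $\prod_{i=1}^n e_iR\oplus 0$ and $0\oplus\prod_{j=1}^l f_jR$ as submodules of $R^{n+l}$, and accordingly $\prod_{i=1}^n e_iQ\oplus 0$ and $0\oplus\prod_{j=1}^l f_jQ$ as submodules of $Q^{n+l}=E(R^{n+l})$. Since $R$ is nonsingular and $e_iR=e_iQ\cap R$, each $e_iR$ is essential in the $R$-injective module $e_iQ$; hence $\prod_{i=1}^n e_iQ$ is the injective hull of $\prod_{i=1}^n e_iR$ inside $Q^{n+l}$ and $\prod_{i=1}^n e_iR=\bigl(\prod_{i=1}^n e_iQ\bigr)\cap R^{n+l}$, so that $\prod_{i=1}^n e_iR$ is an essentially closed submodule of $R^{n+l}$; the same holds for the $f_j$'s.

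Next, using $\sigma$, I set $A=\bigl(\prod_{i=1}^n e_iR\bigr)\cap \sigma^{-1}\bigl(\prod_{j=1}^l f_jR\bigr)$. Since $\prod_{i=1}^n e_iR$ is essential in $\prod_{i=1}^n e_iQ$ and $\prod_{j=1}^l f_jR$ is essential in $\prod_{j=1}^l f_jQ$, the module $A$ is essential in $\prod_{i=1}^n e_iQ$ (being an intersection of two essential submodules), hence essential in $\prod_{i=1}^n e_iR$; and $\sigma(A)=\sigma\bigl(\prod_{i=1}^n e_iR\bigr)\cap \prod_{j=1}^l f_jR$ is essential in $\prod_{j=1}^l f_jR$. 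Consequently the essential closures of $A$ and of $\sigma(A)$ in $R^{n+l}$ are precisely $\prod_{i=1}^n e_iR$ and $\prod_{j=1}^l f_jR$. Now $\sigma|_A\colon A\to \sigma(A)$ is an $R$-isomorphism between submodules of $R^{n+l}$, and applying the closure extension property of Definition \ref{Closure extension property} extends it to an isomorphism $\prod_{i=1}^n e_iR\cong \prod_{j=1}^l f_jR$, as required.

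The point requiring care is that in this last step the closure extension property is used not for right ideals of $R$ but for submodules of the free module $R^{n+l}$, that is, for the matrix ring $M_{n+l}(R)$ — which is again regular and whose maximal right quotient ring is $M_{n+l}(Q)$. I therefore expect the main obstacle to be the verification that the closure extension property of $R$ is inherited by $M_{n+l}(R)$. I would handle this through the Morita correspondence $R\sim M_{n+l}(R)$, under which right ideals of $M_{n+l}(R)$ and their essential closures correspond to submodules of $R^{n+l}$ and their essential closures, reducing the claim to extending an isomorphism between submodules of $R^{n+l}$ to one between their closures; this is deduced from the property for $R$ by induction on the number of summands together with a choice of compatible complements (using that finitely generated submodules of $R^{n+l}$ are direct summands), or else one simply phrases Definition \ref{Closure extension property} in its Morita-invariant form. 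Apart from this reduction, the argument is only bookkeeping with essential extensions over the nonsingular ring $R$.
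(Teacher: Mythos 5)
Your proposal follows the paper's proof essentially step for step: the same reduction via Lemma \ref{going to idempotent}, the same intersection $A=(\prod e_iR)\cap\sigma^{-1}(\prod f_jR)$ (the paper's $I$, with $\sigma(A)$ its $I'$), the same essentiality bookkeeping, and the same final appeal to the closure extension property to pass from $A\cong\sigma(A)$ to an isomorphism of the closures $\prod e_iR$ and $\prod f_jR$. The one point where you go beyond the paper --- observing that Definition \ref{Closure extension property} is stated only for right ideals of $R$ and their closures in $R$, whereas the proof needs it for submodules of the free module $R^{n+l}$ --- is a legitimate concern that the paper's proof silently passes over; your proposed repair via Morita equivalence with $M_{n+l}(R)$ is only sketched (the suggested induction on summands is not obviously workable, since closed submodules of $R^{n+l}$ need not be direct summands when $R$ is not CS), but since the paper offers nothing at this point, reading the definition in its Morita-invariant form, as you suggest, is the honest fix.
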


\begin{proof}
Let $e_{1},\dots ,e_{n},f_{1},\dots ,f_{l}$ be idempotents in $R$, such that 
$\prod_{i=1}^{n}e_{i}Q\overset{\sigma }{\cong }\prod_{j=1}^{l}f_{j}Q$, where 
$Q=Q_{max}^{r}(R)$. According to Lemma \ref{going to idempotent}, we need to
show that $\prod_{i=1}^{n}e_{i}R\cong \prod_{j=1}^{l} f_{j}R$.
Now, $\prod_{i=1}^{n}e_{i}R$ is essential in $\prod_{i=1}^{n}e_{i}Q$ hence, $%
\sigma (\prod_{i=1}^{n}e_{i}R)$ is essential in $\sigma
(\prod_{i=1}^{n}e_{i}Q)=\prod_{j=1}^{l}f_{j}Q$. Next $\prod_{j=1}^{l}f_{j}R$
is essential in $\prod_{j=1}^{l}f_{j}Q$ and so $I^{\prime}:=\sigma
(\prod_{i=1}^{n}e_{i}R)\cap (\prod_{j=1}^{l}f_{j}R)$ is essential in $%
\prod_{j=1}^{l}f_{j}Q$ and indeed in $\prod_{j=1}^{l}f_{j}R$.  Similar arguments with $\sigma^{-1}$ gives that $I:=\sigma ^{-1}(\sigma (\prod_{i=1}^{n}e_{i}R)\cap
(\prod_{j=1}^lf_{j}R))\subseteq \prod_{i=1}^{n}e_{i}R$ is essential in $%
\prod_{i=1}^{n}e_{i}Q$ and hence essential in $\prod_{i=1}^{n}e_{i}R$. 
\begin{equation*}
\xymatrix{ {\prod_{i=1}^ne_iQ} \ar[r]^\sigma & {\prod_{j=0}^lf_jQ} \\
{\prod_{i=1}^ne_iR} \ar[u] & {\prod_{i=1}^lf_jR} \ar[u] \\ I \ar[u]
\ar[r]^\cong & I' \ar[u]}
\end{equation*}%
Since $\prod_{i=1}^{n}e_{i}R$ is closed in $R^{n}$, $\prod_{i=0}^{n}e_{i}R$
is the essential closure of $I$ and our hypothesis gives that $\sigma $
induces an isomorphism between $\prod_{i=1}^{n}e_{i}R$ and $%
\prod_{j=1}^{l}f_{j}R$, as desired.
\end{proof}

Before presenting an application, let us notice that for a regular ring the
notions of CS, quasi-continuous and continuous module are all equivalent .
In (\cite{O}) O'Meara shows that a regular continuous ring is separative.
We give here a direct proof for the CS (equivalently continuous) case.

\begin{proposition}
\label{Regular CS are separative} A regular CS (equivalently continuous or 
quasi continuous) ring $R$ is separative.
\end{proposition}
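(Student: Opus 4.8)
The plan is to reduce the statement to Lemma \ref{going to idempotent} by verifying that a regular CS ring satisfies the closure extension property of Definition \ref{Closure extension property}, and then quote Proposition \ref{Closure extension property implies separativity}. In fact, for a regular ring every principal right ideal $eR$ (with $e=e^2$) is a direct summand, hence closed, and more generally a regular CS ring has the property that every right ideal is essential in a direct summand, i.e. in a closed submodule that splits off. So the natural route is: start with an $R$-isomorphism $\varphi\colon I\to J$ between two right ideals, pass to their essential closures $\overline I=eR$ and $\overline J=fR$ (direct summands since $R$ is CS), and show $\varphi$ extends to an isomorphism $eR\cong fR$.

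First I would recall that in a nonsingular ring the essential closure of a right ideal $I$ inside $R_R$ is unique and, when $R$ is CS, is a direct summand $eR$; moreover $I$ essential in $eR$ forces $r(I)=r(eR)=(1-e)R$ as right annihilators, and since $R$ is nonsingular an isomorphism $\varphi\colon I\to J$ is "nonsingular" in the sense that it extends uniquely to the injective hulls. Concretely, viewing $I\subseteq eR\subseteq E(R_R)=Q$ and using that $Q$ is the maximal right quotient ring, $\varphi$ extends to a $Q$-module map $\hat\varphi\colon eQ\to Q$, which is injective because $\varphi$ is and $I$ is essential in $eQ$; its image is the closure of $J$ in $Q$, namely $fQ$ (after replacing $f$ by the idempotent of $Q$ generating the closure — but since $R$ is CS we may take the closure already inside $R$, giving $f\in R$). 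Then $\hat\varphi(eR)$ is a right $R$-submodule of $fQ$ containing $J$ and essential in it; the key point is to show $\hat\varphi(eR)=fR$.

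The main obstacle is exactly this last equality $\hat\varphi(eR)=fR$: a priori $\hat\varphi$ only maps $eR$ into $fQ$, and one must rule out that the image is a proper essential $R$-submodule. Here is where CS is used decisively. Since $eR$ is a closed (= complement) submodule of $R^1$ and $R$ is regular CS, $eR$ is actually projective and, being a direct summand, $\hat\varphi(eR)\cong eR$ is a direct summand of $Q_R$? No — rather, I would argue as follows: $fR$ is closed in $R_R$, so $\hat\varphi^{-1}(fR)$ is a closed right ideal of $R$ containing the essential submodule $I$, hence $\hat\varphi^{-1}(fR)=eR$ (the unique closure of $I$), i.e. $\hat\varphi(eR)\subseteq fR$; symmetrically, applying the same argument to $\hat\varphi^{-1}\colon fQ\to Q$ and the closed ideal $eR$ gives $\hat\varphi^{-1}(eR)\subseteq\hat\varphi^{-1}(fR)$... — cleaner: run the argument on both sides to get $\hat\varphi(eR)\subseteq fR$ and $\hat\varphi^{-1}(fR)\subseteq eR$, whence $fR=\hat\varphi(\hat\varphi^{-1}(fR))\subseteq\hat\varphi(eR)\subseteq fR$, forcing equality. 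Thus $\varphi$ extends to the isomorphism $eR\cong fR$, so $R$ has the closure extension property, and Proposition \ref{Closure extension property implies separativity} yields that $R$ is separative. I would expect the write-up to need care in checking that the closure of a right ideal of a regular CS ring really is a summand lying inside $R$ (so that $e,f\in R$), and in justifying the extension of $\varphi$ to $\hat\varphi$ via nonsingularity; these are standard but worth stating precisely.
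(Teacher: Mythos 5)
Your overall strategy --- verify the closure extension property of Definition \ref{Closure extension property} and then invoke Proposition \ref{Closure extension property implies separativity} --- is exactly the paper's; the paper simply quotes Corollary 2.32 of Mohamed--M\"uller \cite{MM}, which says that isomorphic submodules of a quasi-continuous module have isomorphic closures (the isomorphism extending the given one). The trouble is with your attempt to prove that fact yourself. The step you correctly single out as the crux, $\hat\varphi(eR)=fR$, is not established: you argue that $\hat\varphi^{-1}(fR)$ is closed ``because $fR$ is closed in $R_R$.'' But for a preimage under $\hat\varphi\colon eQ\to fQ$ to inherit closedness, $fR$ would have to be closed in the \emph{codomain} $fQ$; there it is essential, not closed (indeed $fQ/fR$ is singular), so no conclusion follows. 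What you actually get is only that $\hat\varphi^{-1}(fR)\cap eR$ is an essential right ideal of $eR$ containing $I$, and deciding whether it equals all of $eR$ is precisely the question you set out to answer --- the argument is circular. (A smaller issue: $\hat\varphi^{-1}(fR)$ lives inside $eQ$, not inside $R$, so it is not literally ``a closed right ideal of $R$.'')

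That this step cannot be repaired without genuinely using quasi-continuity is shown by the paper's closing example of O'Meara: in $R=K+D$ (with $K$ the socle of the column-finite matrices and $D$ the diagonals), the right ideals $I=eK$ and $I'=(1-e)K$, $e=\mathrm{diag}(1,0,1,0,\dots)\in R$, are isomorphic, and their closures in $R$ are the direct summands $eR$ and $(1-e)R$, generated by idempotents of $R$ --- exactly the hypotheses your key step actually uses --- yet there is not even an injective map $eR\to(1-e)R$. The ingredient your argument never touches is the defining feature of quasi-continuous modules that Mohamed--M\"uller exploit (projections of $E(M)$ along complementary closed submodules carry $M$ into $M$), and for regular rings CS, continuous and quasi-continuous coincide, which is why the paper can appeal directly to their Corollary 2.32. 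As written, your proof has a genuine gap at its central step.
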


\begin{proof}
Corollary 2.32 in \cite{MM} exactly says that isomorphic submodules of a
quasi-continuous module have isomorphic closures. In our situation this
means that a regular CS ring has the closed extension property and hence is separative. 
\end{proof}

\vspace{5mm}

We do not have an example of a regular ring that is not separative.
According to comments by O'Meara (cf. \cite{O}), a good "playground" for
finding such an example would be the ring $Q=\prod_{n\in \mathbb{N}}M_{n}(F)$%
, where $F$ is a countable field. Let us remark that this ring is the
maximal quotient ring of its socle $S=\prod_{n\in \mathbb{N}}^{\ast }M_{n}(F)
$, where $\prod_{n\in \mathbb{N}}^{\ast }M_{n}(F)$ stands for sequences of $%
\prod M_{n}(F)$ with only finitely many nonzero entries.  Clearly the right (left) maximal quotient ring of a subring of $Q$ containing the socle is $Q$ itself.

\vspace{5mm}

\section{Perspective and separative rings}

\vspace{5mm}

A ring $R$ is perspective if for any two idempotents $e,f\in R$ such that $%
eR\cong fR$ there exists an idempotent $g\in R$ such that $eR\oplus gR=R=fR
\oplus gR$. This notion is left right symmetric and it is well known that a
regular ring $R$ is perspective if and only if $R$ is unit regular and hence
separative. In the previous section we tried to push down the separativity
condition from the right maximal quotient ring $Q(R)$ to $R$ itself.  In this section we will analyze the separativity condition.  

\begin{lemma}
\label{consequences of eQ oplus gQ=Q}  Let $R$ be a  von Neumann regular 
ring with right maximal quotient ring $Q$ and suppose that $e,f$ are
isomorphic idempotents of $R$. Let $g$ be an idempotent in $Q$ such that  $%
eQ\oplus gQ=Q=fQ \oplus gQ$, then there exists $e_1,f_1,g_1,g_2 \in Q$ such
that 

\begin{enumerate}
\item $ee_1+gg_1=1=ff_1+gg_2$. 

\item $gg_1e=ee_1g=ff_1g=gg_2f=0$ 

\item $e=ee_1e$, $f=ff_1f$, $ee_1Q=eQ$, $ff_1Q=fQ$, $gg_1Q=gQ$. 

\item $(1-e)gQ= (1-e)Q$ and $((1-e)gQ )\cap R = (1-e)R$. 

\item $(1-e)(gQ\cap R)\subseteq_e (1-e)R$ and $(1-f)(gQ\cap
R)\subseteq_e(1-f)R$ 

\item $E((1-e)(gQ\cap R))=(1-e)Q$ and $E((1-f)gQ\cap R))=(1-f)Q$ 

\item $eR \oplus (gQ \cap R)$ and $fR\oplus (gQ\cap R)$ are essential right
ideals in $R$. 
\end{enumerate}
\end{lemma}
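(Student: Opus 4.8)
The plan is to realize the hypothesis $eQ\oplus gQ=Q=fQ\oplus gQ$ as two internal direct sum decompositions of $Q$ and to extract all seven assertions from the uniqueness of these decompositions, together with the standard facts that, $R$ being regular and hence right nonsingular, $Q$ is right self-injective with $Q_R=E(R_R)$; in particular $R\subseteq_e Q$ as right $R$-modules and $uQ\cap R=uR$ for every idempotent $u\in R$.

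First I would fix the components of $1$: write $1=u+v$ with $u\in eQ$, $v\in gQ$, and $1=u'+v'$ with $u'\in fQ$, $v'\in gQ$. Then $eu=u$, $gv=v$, $fu'=u'$, $gv'=v'$, so putting $e_1:=u$, $f_1:=u'$, $g_1:=v$, $g_2:=v'$ gives at once $ee_1+gg_1=1=ff_1+gg_2$, which is (1). Next, left multiplication by $u$ (resp.\ $v$) is precisely the projection $Q\to eQ$ (resp.\ $Q\to gQ$) attached to the first decomposition: for $q\in eQ$ one has $uq=q$ and $vq=0$, while for $q\in gQ$ one has $uq=0$ and $vq=q$; likewise for $u',v'$ with the second decomposition. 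Feeding $q=e$ and $q=g$ into this (and $q=f,g$ on the $f$-side) yields $ee_1e=e$, $gg_1e=0$, $ee_1g=0$, $gg_1g=g$, $ff_1f=f$, $gg_2f=0$, $ff_1g=0$, and then $ee_1Q=eQ$, $ff_1Q=fQ$, $gg_1Q=gQ$ since each of these principal ideals visibly contains and is contained in the other. That settles (2) and (3).

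For the remaining statements I would argue as follows. (4): left multiplying $Q=eQ\oplus gQ$ by $1-e$ annihilates the $eQ$-summand, so $(1-e)Q=(1-e)gQ$, and then $(1-e)gQ\cap R=(1-e)Q\cap R=(1-e)R$. (5): the kernel of left multiplication by $1-e$ restricted to $gQ$ is $gQ\cap eQ=0$, so this map is an $R$-module isomorphism $gQ\xrightarrow{\ \sim\ }(1-e)Q$; since $gQ\cap R\subseteq_e gQ$ (because $R\subseteq_e Q$), we get $(1-e)(gQ\cap R)\subseteq_e(1-e)Q$, and as $(1-e)(gQ\cap R)\subseteq(1-e)R\subseteq(1-e)Q$ this essentiality descends to $(1-e)R$; the $f$-version is identical. (6): $(1-e)Q$ is a direct summand of the injective module $Q_R$, hence injective, and it contains $(1-e)(gQ\cap R)$ essentially by (5), so it is its injective hull; again the same for $f$. (7): $eR\cap(gQ\cap R)\subseteq eQ\cap gQ=0$, so the sum is direct, and since $eR=eQ\cap R\subseteq_e eQ$ and $gQ\cap R\subseteq_e gQ$, the direct sum $eR\oplus(gQ\cap R)$ is essential in $eQ\oplus gQ=Q$, hence essential in $R$; likewise for $fR\oplus(gQ\cap R)$.

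I expect the only genuine subtlety to be the observation in (5) that left multiplication by $1-e$ carries the complement $gQ$ \emph{isomorphically} onto $(1-e)Q$, so that essentiality is preserved, together with bookkeeping about which essentiality and injective-hull statements are taken over $R$ rather than over $Q$. Everything preceding it is formal manipulation of the two direct sum decompositions plus the identifications $Q_R=E(R_R)$ and $uQ\cap R=uR$.
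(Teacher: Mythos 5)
Your proof is correct and follows essentially the same route as the paper: take the components of $1$ in the two decompositions to get (1)--(3), left-multiply by $1-e$ for (4), use that $1-e$ acts injectively on $gQ$ (since $eQ\cap gQ=0$) to transport essentiality for (5)--(6), and combine $eR\subseteq_e eQ$ with $gQ\cap R\subseteq_e gQ$ for (7). The only cosmetic differences are that you phrase (2)--(3) via the projection maps and (5), (7) via the induced isomorphism and the ``direct sum of essential submodules is essential'' fact, where the paper argues element-wise and via the injective hull.
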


\begin{proof}
(1) This is clear from $eQ\oplus gQ=Q=fQ \oplus gQ$.

(2) Right multiplying the equality $ee_1+gg_1=1$ by $e$ we get $%
e(1-e_1e)=gg_1e\in eQ\cap gQ=0$. Hence $gg_1e=0$ and $e=ee_1e$. The other
equalities are obtained similarly.

(3) These are easy facts that we leave to the reader.

(4) Left multiplying $eQ\oplus gQ=Q$ by $(1-e)$ we get $(1-e)gQ=(1-e)Q$
intersecting this equality with $R$ gives $(1-e)gQ \cap R = (1-e)R$.

(5) Let $(1-e)gx \in (1-e)gQ=(1-e)Q$, and let $E$ be an essential right
ideal of $R$ such that $\{0\}\ne gxE\subseteq gQ\cap R$. Then, since $gQ
\cap eQ=0$ we get $0\ne (1-e)gxE \subseteq (1-e)(gQ\cap R)$.  This shows
that $(1-e)(gQ\cap R)\subseteq_e (1-e)Q$ and hence  also $(1-e)(gQ \cap R)
\subseteq_e (1-e)R$, as desired.

(6) This is clear: since $(1-e)(gQ\cap R)\subseteq_e (1-e)R \subseteq_e
(1-e)Q$, we get that $E((1-e)gQ\cap R)= (1-e) Q$.

(7) Computing the injective hull we get  $E(eR\oplus gQ\cap R)=eQ\oplus gQ=Q$.  Hence $eR\oplus gQ\cap R \subseteq_e Q$ and so  $eR\oplus gQ\cap R\subseteq_e R$.
\end{proof}

 For a regular ring, perspectivity characterizes unit
regularity (cf. Corollary 4.4 \cite{G}).
The statement $(7)$ of the above lemma \ref{consequences of eQ oplus gQ=Q} leads to an equivalent characterization of unit regular rings as shown in Proposition 3.3.
But first we need the following lemma:

\begin{lemma}
Let $R$ be a regular ring and $e,f$ two idempotents in $R$ such that there exists $g\in Q=Q_{max}^r(R)$ with $eQ\oplus gQ=fQ\oplus gQ=Q$ then $eR \oplus (gQ \cap R)=fR\oplus (gQ\cap R)$ if and only if $e\in ee_1fR
$ and $f\in ff_1eR$. 
\end{lemma}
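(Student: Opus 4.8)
The statement is an "if and only if" characterizing when the two essential right ideals $eR\oplus(gQ\cap R)$ and $fR\oplus(gQ\cap R)$ actually coincide, in terms of membership conditions $e\in ee_1fR$ and $f\in ff_1eR$, where $e_1,f_1$ come from Lemma \ref{consequences of eQ oplus gQ=Q}. The plan is to prove both implications by exploiting the direct sum decompositions $eQ\oplus gQ=Q=fQ\oplus gQ$ and the Peirce-type identities $ee_1+gg_1=1=ff_1+gg_2$ together with the orthogonality relations $gg_1e=ee_1g=ff_1g=gg_2f=0$ recorded in parts (1)--(3) of that lemma. Throughout, I would keep in mind that $ee_1$ and $ff_1$ are idempotents in $Q$ (since $ee_1ee_1 = ee_1(1-gg_1) = ee_1 - ee_1gg_1 = ee_1$ using $ee_1g=0$), with $ee_1Q=eQ$ and $ff_1Q=fQ$, so $ee_1$ is the idempotent with range $eQ$ along $gQ$, and similarly for $ff_1$.

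First I would handle the easier direction ($\Leftarrow$). Assuming $e\in ee_1fR$, I want to show $eR\subseteq fR\oplus(gQ\cap R)$, and by symmetry $fR\subseteq eR\oplus(gQ\cap R)$, which together with the fact that both sides are essential and closed (being of the form ``idempotent part $\oplus\,gQ\cap R$'') should force equality. The key computation: write $e = ee_1 f r$ for some $r\in R$; then since $ee_1$ is the projection onto $eQ$ along $gQ$, we have $fr = ee_1 fr + gg_1 fr = e\cdot(\text{something}) + (\text{element of }gQ)$, and intersecting with $R$ appropriately (using that $fr\in R$ and $e\in R$, hence $gg_1 fr\in R\cap gQ = gQ\cap R$) gives $fr\in eR + (gQ\cap R)$; I would then need to invert this to get $e\in fR + (gQ\cap R)$. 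Actually the cleaner route is: $e = ee_1fr$ shows $e\in ee_1fR$, and applying $ff_1$ and using $fr = ff_1 fr + gg_2 fr = fr + 0$ trivially, but applying the decomposition along $gQ$ I get that the ``$fR$-component'' of $e$ under the projection $ff_1$ captures $e$ modulo $gQ\cap R$. I expect the bookkeeping here to need care but to go through once one consistently uses ``apply $ee_1$ or $ff_1$ to land in $eQ$ resp.\ $fQ$, apply $1-ee_1=gg_1$ or similar to land in $gQ$.''

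For the harder direction ($\Rightarrow$), I would assume $eR\oplus(gQ\cap R)=fR\oplus(gQ\cap R)$ and extract $e\in ee_1fR$. Since $e\in eR\subseteq fR\oplus(gQ\cap R)$, write $e = fr + c$ with $r\in R$ and $c\in gQ\cap R$. Now apply the projection $ee_1$ (range $eQ$, kernel $gQ$): since $c\in gQ$ we get $ee_1 c = 0$, while $ee_1 e = e$ (this is $e=ee_1e$ from part (2)), so $e = ee_1 e = ee_1 fr\in ee_1 fR$, exactly as wanted. The symmetric argument starting from $f\in fR\subseteq eR\oplus(gQ\cap R)$ and applying $ff_1$ yields $f\in ff_1 eR$. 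So this direction is in fact immediate from the Peirce identities.

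The main obstacle, then, is really the ($\Leftarrow$) direction: one must check that the membership conditions $e\in ee_1 fR$ and $f\in ff_1 eR$ are strong enough to force the full equality of the two right ideals, not merely $e\in fR\oplus(gQ\cap R)$ and $f\in eR\oplus(gQ\cap R)$ — though I claim these are equivalent. Indeed, if $e=ee_1fr$ then $e - fr = ee_1fr - fr = (ee_1 - 1)fr = -gg_1 fr\in gQ$; and $gg_1fr\in R$ since $g,g_1$ arise in $Q$ but... here I must be careful, as $gg_1\in Q$ need not map $R$ into $R$. The fix: $e-fr\in R$ automatically (both $e,fr\in R$) and $e-fr = -gg_1 fr\in gQ$, so $e-fr\in gQ\cap R$, giving $e\in fR + (gQ\cap R) = fR\oplus(gQ\cap R)$ (the sum is direct since $fQ\cap gQ=0$). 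Hence $eR\subseteq fR\oplus(gQ\cap R)$, and symmetrically $fR\subseteq eR\oplus(gQ\cap R)$; adding $gQ\cap R$ to both gives $eR\oplus(gQ\cap R)\subseteq fR\oplus(gQ\cap R)$ and the reverse, so they are equal. I would present the argument in roughly this order: first dispatch ($\Rightarrow$) via projections, then do ($\Leftarrow$) via the identity $e-fr=-gg_1fr\in gQ\cap R$ and its symmetric counterpart.
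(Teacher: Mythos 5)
Your proposal is correct and follows essentially the same route as the paper: the forward direction is the Peirce-projection computation ($e=fr+c$ with $c\in gQ\cap R$, then apply $ee_1$, or equivalently the paper's multiplication by $gg_1$ using $gg_1e=0$), and the converse is the identity $e-fr=-gg_1fr\in gQ\cap R$ giving the two inclusions. Your explicit observation that $gg_1fr\in R$ because $e-fr\in R$ is the same point the paper uses, just stated more carefully.
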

\begin{proof}
	First suppose that $eR\oplus gQ\cap R=fR\oplus gQ\cap 
	R$.  Lemma \ref{consequences of eQ oplus gQ=Q} (3) shows $gg_1Q=gQ$ and by the hypothesis $e\in eR\oplus gg_1Q$.  Hence there exist an element $g_1\in Q$ such that 
	$e=fx +gg_1y$ for some $x\in R$ and $y\in Q$ with $gg_1y \in R$.
	Since $gg_1e=0$,	we then have $0=gg_1e=gg_1fx + gg_1y$. This  gives that $gg_1y=-gg_1fx \in R$ 
	and hence,  $e=fx+gg_1y=fx-gg_1fx=(1-gg_1)fx=ee_1fx$. So that $e\in ee_1fR
	$.  The fact that $f\in ff_1eR$ is obtained similarly. Conversely,  suppose
	that $e\in ee_1fR$, then we have that $eR=ee_1fR$. We write  $%
	e=ee_1fx=(1-gg_1)fx=fx-gg_1fx\in fR\oplus gQ\cap R$.
\end{proof} 

We say that two idempotents $e,f$ in a ring $R$ are isomorphic if $eR\cong fR$.  It is well-known that this notion is left-right symmetric.  
We will give a characterization of perspectivity by using the classical module theoretic notion of complement of a submodule
We recall that for any submodule $N_R$ of $M_R$ there exists, by Zorn lemma, a maximal submodule $K_R\subseteq M$ with the property of being disjoint from $N$.  In this case $N \oplus K \subseteq_e M$.

\begin{proposition}
\label{unit regular}
	Let $R$ be a regular ring.  The following conditions are equivalent:
	\begin{enumerate}
		\item The ring $R$ is unit regular.
		\item The ring $R$ is perspective.
		\item For isomorphic idempotents $e,f\in R$,  there exists a right ideal $I$ of $R$ such that $eR\oplus I=fR\oplus I \subseteq_e R$.
		\item For isomorphic idempotents $e,f\in R$, there exists $g^2=g\in Q=Q_{max}^r(R)$ such that $eR\oplus gQ\cap R=fR\oplus gQ\cap R \subset_e R$.
	\end{enumerate}
\end{proposition}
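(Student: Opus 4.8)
The plan is to prove the cyclic chain of implications $(1)\Rightarrow(2)\Rightarrow(3)\Rightarrow(4)\Rightarrow(1)$, leaning on the structural facts already assembled in Lemma~\ref{consequences of eQ oplus gQ=Q} and the preceding lemma on when $eR\oplus(gQ\cap R)=fR\oplus(gQ\cap R)$.

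\medskip

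First, $(1)\Rightarrow(2)$ is the classical equivalence of unit regularity and perspectivity for regular rings (cf. Corollary~4.4 in \cite{G}), already quoted in the text, so nothing new is needed. For $(2)\Rightarrow(3)$: given isomorphic idempotents $e,f$ with $eR\cong fR$, perspectivity produces an idempotent $g\in R$ with $eR\oplus gR=R=fR\oplus gR$; take $I=gR$. Then $eR\oplus I=R=fR\oplus I$, and $R\subseteq_e R$ trivially, so $(3)$ holds with the strongest possible essential extension, namely equality with $R$.

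\medskip

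The step $(3)\Rightarrow(4)$ is where the quotient ring $Q=Q_{max}^r(R)$ re-enters. Starting from a right ideal $I$ with $eR\oplus I=fR\oplus I\subseteq_e R$, I want to replace $I$ by something of the form $gQ\cap R$ for an idempotent $g$ of $Q$. The natural candidate is obtained by taking the injective hull: $E(I)$ sits inside $Q$ as $\bar I:=gQ$ for a suitable idempotent $g\in Q$ (here one uses that $Q$ is right self-injective and regular, so closed submodules of $Q_Q$ are direct summands, generated by idempotents). Since $I\subseteq_e gQ$ and $I\subseteq R$, one checks $gQ\cap R$ is the essential closure of $I$ in $R$; as $I\subseteq_e R$ and $eR\oplus I$ is a direct sum, a short argument shows $eR\cap(gQ\cap R)=0$ and $eR\oplus(gQ\cap R)$ is still essential in $R$, and the same for $f$. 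Finally one must see that the two sums $eR\oplus(gQ\cap R)$ and $fR\oplus(gQ\cap R)$ are actually equal, not merely isomorphic: both equal the essential closure in $R$ of the common essential submodule $eR\oplus I=fR\oplus I$, so they coincide. The main obstacle is precisely this last identification — making sure the \emph{same} $g$ works for both $e$ and $f$ and that passing to closures does not destroy the direct-sum decompositions; this is exactly the kind of bookkeeping that Lemma~\ref{consequences of eQ oplus gQ=Q}(4)--(7) is designed to handle, so I would invoke those parts rather than redo the computations.

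\medskip

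For $(4)\Rightarrow(1)$, I would use the preceding lemma: from $eR\oplus(gQ\cap R)=fR\oplus(gQ\cap R)$ we obtain $e\in ee_1fR$ and $f\in ff_1eR$, where $e_1,f_1$ come from Lemma~\ref{consequences of eQ oplus gQ=Q}. Together with the relations $e=ee_1e$, $f=ff_1f$, $eQ=ee_1Q$, $fQ=ff_1Q$ of part~(3) of that lemma, these inclusions say that $eR$ and $fR$ are not just isomorphic but perspective inside $R$ (one manufactures the requisite complementary idempotent from $gg_1$, using $ee_1+gg_1=1$). Hence $R$ is perspective, and invoking again the classical result that a perspective regular ring is unit regular closes the cycle. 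The only genuinely delicate point here is translating "$e\in ee_1fR$ and $f\in ff_1eR$" into an honest statement $eR\oplus hR=R=fR\oplus hR$ with a single idempotent $h\in R$; I expect $h$ to be (a suitable idempotent generating the same right ideal as) $gg_1$, and verifying $h^2=h$ lies in $R$ and that both decompositions hold is the last routine-but-careful computation.
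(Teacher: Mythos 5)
Your cyclic plan and the implications $(1)\Rightarrow(2)\Rightarrow(3)$ match the paper, but there are two problems downstream, one fixable and one structural. In $(3)\Rightarrow(4)$, your justification that $eR\oplus(gQ\cap R)=fR\oplus(gQ\cap R)$ --- namely that ``both equal the essential closure in $R$ of the common essential submodule $eR\oplus I=fR\oplus I$'' --- cannot be right: the essential closure in $R$ of a right ideal that is already essential in $R$ is $R$ itself, whereas $eR\oplus(gQ\cap R)$ is in general a proper right ideal; and parts (4)--(7) of Lemma \ref{consequences of eQ oplus gQ=Q}, which you offer as a fallback, only give essentiality of each sum, not their equality. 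The equality is true and has a one-line proof, which is what the paper does: $e\in fR\oplus I\subseteq fR\oplus(gQ\cap R)$ and $f\in eR\oplus I\subseteq eR\oplus(gQ\cap R)$, so each sum contains the other.

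The genuine gap is in $(4)\Rightarrow(1)$. You propose to turn $e\in ee_1fR$ and $f\in ff_1eR$ into perspectivity by ``manufacturing the requisite complementary idempotent from $gg_1$,'' i.e.\ finding $h^2=h\in R$ with $hR$ essentially equal to $gg_1Q\cap R=gQ\cap R$ and $eR\oplus hR=R=fR\oplus hR$. But $g,g_1$ live in $Q$, and $gQ\cap R$ is in general \emph{not} a direct summand of $R_R$ and is not generated by any idempotent of $R$; this is exactly why condition (4) is a priori weaker than (2) and (3), and if such an $h$ could always be extracted the implication would be vacuous. (The auxiliary lemma you cite is stated in the paper but is not used in its proof of this proposition.) The paper closes the cycle by a different, purely internal argument: by regularity write $eR=hR\oplus(eR\cap fR)$ and $fR=pR\oplus(eR\cap fR)$; the hypothesis $eR\oplus(gQ\cap R)=fR\oplus(gQ\cap R)$ makes $hR$ and $pR$ complements of the same submodule $(eR\cap fR)\oplus(gQ\cap R)$ in the same module, hence $hR\cong pR$ via some $\varphi$; the graph $S=\{x+\varphi(x)\mid x\in hR\}$ then satisfies $eR+fR=eR\oplus S=fR\oplus S$; finally, since $eR+fR$ is a direct summand of $R$, writing $R=(eR+fR)\oplus E$ yields the common complement $S\oplus E$ and hence perspectivity. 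You would need to replace your sketch of $(4)\Rightarrow(1)$ by an argument of this kind.
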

\begin{proof}
	(1)$\Leftrightarrow$(2): This is classical (cf. \cite{G}, corollary 4.4)
	
	(2)$\Rightarrow$(3): This is clear. 
		
	(3)$\Rightarrow$(4): Starting with $eR\oplus I=fR\oplus I \subseteq_e R$ and taking injective hulls we get $eQ\oplus E(I)= fQ \oplus E(I)=Q$.  Since there exists an idempotent $g\in Q$ such that $E(I)=gQ$, we get $eQ\oplus gQ=fQ\oplus gQ=Q$.  Since $e\in fR\oplus I\subseteq fR\oplus gQ\cap R$ and $f\in eR\oplus I \subseteq eR\oplus gQ\cap R$, we get $eR\oplus gQ\cap R=fR\oplus gQ\cap R$.  
	Lemma \ref{consequences of eQ oplus gQ=Q} (7) proves (3) implies (4). 	
	
	(4)$\Rightarrow$(2): By hypothesis we have idempotents $e,f\in R$, and $g\in Q=Q_{max}^r(R)$ such that $eR\oplus gQ\cap R=fR\oplus gQ\cap R \subset_e R$.   Since $R$ is a regular ring there exist idempotents $h,p\in R$ such that $eR=hR\oplus (eR \cap fR)$ and $fR=pR \oplus (eR \cap fR)$.   Therefore we obtain 
	$eR\oplus gQ\cap R=hR\oplus (eR \cap fR) \oplus (gQ\cap R)$.  Similarly $fR\oplus (gQ\cap R)=pR \oplus (eR \cap fR)\oplus (gQ\cap R)$.  so, the hypothesis gives $hR\oplus (eR \cap fR) \oplus (gQ\cap R)=pR \oplus (eR \cap fR)\oplus (gQ\cap R)$.  This shows that $hR\cong pR$,   let $\varphi$ be the isomorphism from $hR$ to $pR$.  By setting  $S=\{x + \varphi(x) \mid x \in hR\}$,  we obtain $eR+fR=eR \oplus S=fR\oplus S$.  Since $R$ is regular we can write $R=(eR+fR)\oplus E$ for some right ideal $E$ and we obtain $R=eR\oplus (S\oplus E)=fR\oplus(S\oplus E)$.  This proves (4) implies (2).  
 \end{proof}



\begin{remark}
\label{perspectivity implies one sided unit regular}  
{\rm One would like to know what property is inherited by $R$ if $Q=Q_{max}^r (R)$ is assumed to be perspective (equivalently unit regular).

For instance, a weaker version of the closure extension property given in Section 2 will be sufficient for the perspective property: to go down from $Q_{max}(R)$ to $R$.  Let us give more details.
	
If a regular ring $R$
satisfies the property that for two isomorphic right ideals their essential closures are subisomorphic.   Then if $Q$ is perspective (equivalently unit regular) so is $R$.

	We need to show that if $e,f\in R$ are idempotents and $eR\cong fR$ then $(1-e)R$ is isomorphic to $(1-f)R$ (cf.\cite{G}). 
	Since $eR\cong fR$ we have that $eQ\cong fQ$ and the fact that $Q$ is perspective implies that there exists $g^2=g\in Q$ such that $eQ\oplus gQ=fQ\oplus gQ=Q$.
	Lemma \ref{consequences of eQ oplus gQ=Q} then shows that that $eR\oplus gQ\cap R\subseteq _{e}R$ and $fR\oplus gQ\cap R\subseteq _{e}R$. We first
	show that $(1-e)(gQ\cap R)\subseteq _{e}(1-e)R$. Let $x\in (1-e)R$, and let $%
	E$ be an essential right ideal of $R$ such that $\{0\}\neq xE\subseteq
	eR\oplus gQ\cap R$. Then, since $x=(1-e)x$, we have $0\neq
	xE=(1-e)xE\subseteq (1-e)(gQ\cap R)$, as desired. So we conclude that the
	essential closure of $(1-e)(gQ\cap R)$ in $R$ is $(1-e)R$ and similarly the
	closure of $(1-f)(gQ\cap R)$ is $(1-f)R$. Since $gQ\cap R$ is disjoint from $%
	eR$ and from $fR$, left multiplication by $(1-e)$ and by $(1-f)$ are $1-1$
	map on $gQ\cap R$ and hence $(1-e)(gQ\cap R)\cong gQ\cap R\cong (1-f)(gQ\cap
	R)$. The weak extension property then implies that $(1-e)R$ and $(1-f)R$ are subisomorphic.
	We thus have a $1-1$ map $%
	\varphi : (1-e)R\rightarrow (1-f)R$. This map with our initial isomorphism $\psi$
	gives us an injective map  $eR\oplus (1-e)R \overset{(\psi,\varphi)}{%
		\longrightarrow} fR\oplus (1-f)R$. The map $\theta=(\psi,\varphi$), has a left inverse. Since $Q$ is Dedekind finite, the same is true for $%
	End_R(R)=R$.   Thus, $\theta$ and $\varphi$ are
	isomorphisms.	This implies that $R$ is unit regular.
}
\end{remark}

We now close the paper with
the following example due to O'Meara \cite{OM} which shows that the property mentioned in the above remark is not true in general

\begin{example}
	Let $Q$ be the ring of all countably-infinite, column-finite matrices over a field $F$. Let $K=socle(Q)$. Let $R=K + D$ where $D$  is the ring of all diagonal matrices over $F$. Note $R$ is regular. Let $e=diag(1,0,1,0,...)$ be the diagonal matrix.  Now consider the right ideals of $R$:
	$I=eK$,     $I'=(1-e)K$.
	$I$ and $I'$ are isomorphic right ideals of $R$ and their essential closures in $R$ are $eQ \cap R=eR$ and $(1-e)Q \cap R= (1-e)R$.  A map from $eR$ to $(1-e)R$ would be given by left multiplication of some element of $(1-e)Re$.  Since  $(1-e)Re\subset K$,  left multiplication by an element of this set cannot induce a $1-1$ map. 
%
\end{example}


\begin{thebibliography}{99}
\bibitem{AGOP} P. Ara, K. R. Goodearl, K. C. O\'Meara and
E. Pardo, Separative cancellation for projective modules over exchange
rings, Israel J. Math. 105 (1998) 105-137.

\bibitem{C} , H. Chen, Rings related to stable range conditions, Series in
Algebra Volume 11, World scientific (2011).

\bibitem{E} Ehrlich, G., Units and one sided units in regular rings, Trans.
American Math. Soc. \textbf{216}, (1976), 81-90

\bibitem{G} K.R. Goodearl, Von Neumann regular rings, Krieger Publishing
company Malabar, Florida, ($\mathbf{1991}$).

\bibitem{HO} J. Hannah and K.~C.~O'Meara, Products of ldempotents in Regular
Rings, II. Journal of Algebra Vol. $\mathbf{(123)}$, (1989) 223-239.


\bibitem{ESS} N. Er, S.Singh, AR Srivastava, Rings and modules which are
stable under automorphisms of their injective hullsJournal of Algebra (2013)
379 223-229.

\bibitem{JS} S.~K. Jain and S. Singh, On pseudo injective modules and self
pseudo injective rings, Journal Math. Sc. ($\mathbf{2}$) (1967), 23-31.

\bibitem{L} T.~Y.~Lam, Lectures on modules and rings, Springer Verlag GTM ($%
\mathbf{189}$).

\bibitem{L2} T.~Y.~Lam, Exercises in classical ring theory, second edition,
Springer Verlag, problem books in mathematics (2003).

\bibitem{Lamb} J. Lambek, Lectures on rings and modules, Blaisdell
Publishing Company (1966).

\bibitem{MM} S.~H.~Mohamed and B.~M\"uller Continuous and discrete modules,
London Mathematical Society lecture Note Series, Cambridge Univeritypress ,
New York, Port Chester,Melbourne, Cambridge university press.

\bibitem{O} K.~C.~O'Meara, Products of idempotents in separative regular
rings,Journal of Algebra and Its Applications Vol. 13, No. 8 ($\mathbf{2014} 
$) 1450071 (14 pages)

\bibitem{OM}, Private communication.

\bibitem{STG} A. Srivastava, A. Tuganbaev, P.A. Guil Asensio, Invariance of
modules under automorphisms of their envelopes and covers.

\bibitem{S} Bo Stestr\"om, Rings of quotients, Springer Verlag (1975).
\end{thebibliography}
\end{document}